\newenvironment{@abssec}[1]{%
	\if@twocolumn
	
	\section*{#1}%
	\else
	
	\vspace{.05in}\footnotesize
	\parindent .2in
	{\upshape\bfseries #1. }\ignorespaces
	\fi}
\par\vspace{.1in}\fi}
\newenvironment{keywords}{\begin{@abssec}{\keywordsname}}{\end{@abssec}}
\newenvironment{AMS}{\begin{@abssec}{\AMSname}}{\end{@abssec}}
\newcommand\keywordsname{Key words}
\newcommand\AMSname{AMS subject classifications}
\newcommand\AMname{AMS subject classification}
\newtheorem{theorem}{Theorem}
\newtheorem{proposition}[theorem]{Proposition}
\def\qed{\vbox{\hrule height0.6pt\hbox{%
			\vrule height1.3ex width0.6pt\hskip0.8ex
			\vrule width0.6pt}\hrule height0.6pt
}}
\title{The principal eigenfunction of the Dirichlet Laplacian with prescribed numbers of critical points on the upper half of a topological torus
\footnotetext[3]{This research was partially supported by the Grants-in-Aid
for Scientific Research (B) ($\sharp$ 18H01126 and $\sharp$ 17H02847) of
Japan Society for the Promotion of Science.}\footnotemark[3]}
\author{Putri Zahra Kamalia\footnote{Corresponding author.} \footnote{Research Center for Pure and Applied Mathematics, Graduate School of Information Sciences, Tohoku
University, Sendai, 980-8579, Japan ({\tt putrizahrakamalia@gmail.com}${}^*$, {\tt  sigersak@gmail.com}).}\  \  and \  Shigeru Sakaguchi\footnotemark[2]} 
\date{}
\begin{document}

\maketitle

\begin{abstract}
We consider the principal eigenvalue problem for the Laplace-Beltrami operator on the upper half of a topological torus under the Dirichlet boundary
condition. We present a construction of the upper half of a topological torus that admits the principal eigenfunction having  exact numbers of critical points. Furthermore, we manage to identify the locations of all the critical points of the principal eigenfunction explicitly.
\end{abstract}

\begin{keywords}
	Principal eigenfunction;  the upper half of a topological torus;  elliptic equation;  Dirichlet Laplacian; critical point.
\end{keywords}

\begin{AMS}
	35J25; 
	35J05; 
	47A75; 
    58J37
\end{AMS}

\section{Introduction}
Let $M^+=(M^+,g)$ be the upper half of a topological torus equipped with a Riemannian metric $g$ written in local coordinates $x = (x^1,x^2)$.  
On $M^+$, we consider the principal eigenvalue problem for the Laplace-Beltrami operator under the Dirichlet boundary condition
\begin{equation}\label{Dirichlet Laplacian}
\begin{cases}
\Delta_g u+\lambda_1u=0\ \mbox{ in }M^+,\\[7pt]
u>0\ \mbox{ in }M^+, \quad u=0\ \mbox{ on }\partial M^+ ,\\[7pt]
 \|u\|_{L^2(M^+)}=1,
\end{cases}
\end{equation}
where $\lambda_1$ is the principal eigenvalue of the Laplace-Beltrami operator $\Delta_g$  on $M^+$ given by
\begin{equation}\label{delta u}
\Delta_{g} u = \textrm{div} (\nabla_{g} u)= \sum_{i=1}^2 \frac{1}{\sqrt{|g|}} \frac{\partial}{\partial x^i}  \left(\sqrt{|g|} (\nabla_{g} u)^i \right).
\end{equation}
The main objective of this paper is to show the existence of the principal eigenfunction of \eqref{Dirichlet Laplacian}  on the upper half of a topological torus with  exact numbers of critical points. Furthermore, we aim to disclose the locations of all the critical points explicitly.

In \cite{volkmer}, Volkmer considers the eigenvalue problem for the Laplace-Beltrami operator on the standard torus  $T^2$ embedded in $\mathbb{R}^3$. In \cite[Theorem 1, p. 825]{volkmer}, he constructs the system of the eigenfunctions together with the eigenvalues on $T^2$. For the existence of the critical points of eigenfunctions on topological tori, we may refer to the work that have been done by \cite{bls, jakobson}. Both studies consider the Liouville metric on the flat torus.
In \cite[Theorem 1.2, p. 198]{ep}, Enciso et al. show that for any compact  $d$-dimensional Riemannian manifold, $d\geq3$,  one can find a metric in such a way that the first nontrivial eigenfunction can have as many non-degenerate critical points as one wants. See \cite{jnt, magnanini} for two surveys on geometric properties and critical points of eigenfunctions of the Laplace-Beltrami operator.
 
 In our previous work \cite{ks}, we construct topological tori together with the nonconstant stable stationary solutions of reaction-diffusion problems  with exactly $4n$ critical points whose locations are explicit. The topological tori are constructed by employing the regular perturbations of the standard torus $T^2$. We replace the radius of the tube by a positive periodic function. We refer to the new topological torus as the perturbed torus $T^2_\epsilon$ with a small parameter $\epsilon$. 
 
   In  this paper, we start with considering  \eqref{Dirichlet Laplacian} on the upper half $T^+$ of a standard torus $T^2$. By the symmetry of $T^+$, the principal eigenfunction satisfies an ordinary differential equation, and hence its critical points make one circle on $T^+$. Next, we cut away the lower half of the perturbed torus $T^2_\epsilon$  in \cite{ks}. We call the remaining  surface  by the upper half $T^+_\epsilon$ of the perturbed torus $T^2_\epsilon$.  With the aid of the implicit function theorem, we prove that the principal eigenfunction of \eqref{Dirichlet Laplacian} exists on $T^+_\epsilon$. The construction of the upper half $T^+_\epsilon$ of the perturbed torus $T^2_\epsilon$ provides us an explicit formula for  \eqref{Dirichlet Laplacian}. Hence, it allows us to identify the locations of all the critical points of the principal eigenfunction of \eqref{Dirichlet Laplacian} on $T^+_\epsilon$.
   
   In \cite[Proof of Theorem 2.2, p.6]{ks} we see that  the critical points of stable stationary solutions of reaction-diffusion problems on $T^2$ consist of the two circles $\partial T^+=T^2\cap \lbrace x_3=0\rbrace$. When we perform the regular perturbation $T^2_\epsilon$ of $T^2$, we obtain that all the critical points of the solutions on $T^2_\epsilon$ lie in  $\partial T^+_\epsilon =T^2_\epsilon\cap \lbrace x_3=0\rbrace $ regardless of the value of $\epsilon$ (see \cite[Proof of Theorem 1.1, p.14]{ks}). 
   On the other hand, the locations of the critical points of the principal eigenfunctions of \eqref{Dirichlet Laplacian} lie in the interior of $T^+_\epsilon$ because of the Dirichlet boundary condition and they depend on the value of $\epsilon$.  Thus, we need another new argument to examine the locations of the critical points (see Proposition \ref{value c} in Section 4).

The main result of this paper states the following.
   \begin{theorem}\label{main theorem}
   	There exists a number $N \in \mathbb{N}$ such that, for each $n \geq N$, a perturbation $M^+$ of the upper half of a standard torus $T^2$ is constructed in such a way that the principal eigenfunction $u$ of \eqref{Dirichlet Laplacian} has exactly $2n$ critical points.
   	\end{theorem}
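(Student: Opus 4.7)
The plan is to treat the problem as a regular perturbation from the rotationally symmetric case. On the unperturbed upper half $T^+$ of the standard torus, coordinates $(\phi,\theta)$ (with $\phi$ the longitudinal angle and $\theta\in(0,\pi)$ the tube angle, so $\partial T^+=\{\theta=0\}\cup\{\theta=\pi\}$) diagonalize the rotational symmetry in $\phi$. The principal eigenfunction $u_0$ therefore depends only on $\theta$ and satisfies an ODE on $(0,\pi)$ with Dirichlet conditions. One reads off $u_0$ explicitly, observes that it attains its maximum at some unique interior angle $\theta^{\ast}$ with $u_0''(\theta^{\ast})<0$, and concludes that the critical set of $u_0$ on $T^+$ is the whole circle $\{\theta=\theta^{\ast}\}$, while $\lambda_1^{(0)}$ is a simple eigenvalue of the self-adjoint Dirichlet Laplacian $-\Delta_{g_0}$.

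Following the construction of the perturbed torus in \cite{ks}, I would replace the tube radius $a$ by $a+\epsilon\,h(\phi)$, where $h$ is a smooth $2\pi/n$-periodic function (the canonical choice being $h(\phi)=\cos(n\phi)$), and cut away the lower half to obtain $T^+_\epsilon=M^+$ with metric $g_\epsilon$ depending smoothly on $\epsilon$. Since $\lambda_1^{(0)}$ is simple, standard analytic perturbation theory for self-adjoint operators (equivalently, the implicit function theorem applied on the unit $L^2$-sphere in $H^1_0$) produces a unique smooth branch of principal eigenpairs $(\lambda_\epsilon,u_\epsilon)$ with $u_\epsilon\to u_0$ in $C^2$ as $\epsilon\to 0$. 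An expansion $u_\epsilon=u_0+\epsilon u_1+O(\epsilon^2)$ with $u_1$ solving an explicit linear Fredholm equation on $T^+$ driven by $h$ is then available.

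To count critical points, I consider the system $\partial_\phi u_\epsilon=0$, $\partial_\theta u_\epsilon=0$. Because $u_\epsilon$ is $C^2$-close to $u_0$ and $\partial_\theta^2 u_0(\theta^{\ast})\neq 0$, the implicit function theorem applied to the second equation yields a smooth $2\pi$-periodic curve $\theta=\Theta(\phi,\epsilon)$ near $\theta^{\ast}$ along which $\partial_\theta u_\epsilon$ vanishes, and $\partial_\theta u_\epsilon$ does not vanish elsewhere for small $\epsilon$. Hence every critical point lies on this curve and corresponds to a zero of
\begin{equation*}
F(\phi,\epsilon):=\partial_\phi u_\epsilon\bigl(\phi,\Theta(\phi,\epsilon)\bigr).
\end{equation*}
Rotational symmetry at $\epsilon=0$ gives $F(\phi,0)\equiv 0$, so $F(\phi,\epsilon)=\epsilon\,G(\phi)+O(\epsilon^2)$ with $G$ computable from $u_1$. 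By the $2\pi/n$-periodicity of $h$ and the form of the reduced equation, $G(\phi)=c\,h'(\phi)$ for some constant $c$, and once $c\neq 0$ is established, $G$ has exactly $2n$ simple zeros on the circle. A standard continuity/degree (or Rouché-type) argument then shows that $F(\cdot,\epsilon)$ retains exactly $2n$ zeros for all sufficiently small $\epsilon>0$, which gives exactly $2n$ critical points of $u_\epsilon$.

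The main obstacle is proving $c\neq 0$: one must solve the linearized eigenvalue equation for $u_1$ explicitly and show that its contribution to $\partial_\phi u_\epsilon$ restricted to the unperturbed critical circle is genuinely non-trivial. This is precisely the content of the proposition on the value of $c$ announced for Section~4, and it is the step on which the whole counting argument pivots; once it is in hand, the remaining steps (implicit function theorem, reduction, zero-counting of a periodic trigonometric expression) are essentially routine.
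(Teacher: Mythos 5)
Your proposal follows essentially the same route as the paper: perturb the tube radius by a $2\pi/n$-periodic function, use the implicit function theorem (as in \cite{henry}) to get a smooth branch $u^\epsilon\to U$ in $C^2$, localize the critical set near the unperturbed critical circle, reduce to the first-order term $V=\partial_\epsilon u^\epsilon|_{\epsilon=0}$, and correctly flag the nondegeneracy constant $c\neq0$ (in the paper's notation, $C_2(\varphi^*)>0$) as the crux, which the paper handles in Proposition~\ref{value c}. The one place the paper is noticeably slicker than your sketch: instead of a Rouch\'e/degree argument to count zeros of $F(\cdot,\epsilon)=\partial_\phi u^\epsilon$ along the reduced curve, the paper exploits the reflection symmetries of $T^+_\epsilon$ across the $n$ vertical planes $H$ to conclude that $\partial_\theta u^\epsilon$ vanishes exactly at the $2n$ angles $\theta_k=\frac{(2k+1)\pi}{2n}$ \emph{for every} $\epsilon$, which pins the angular positions a priori; the perturbation expansion is then only used to check sign conditions on second derivatives and to exclude extraneous zeros, avoiding the uniform $C^1$-control of the $o(\epsilon)$ remainder that a Rouch\'e-type argument would require. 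Two small caveats in your write-up: $u_0$ is not available in closed form (the radial ODE $((R+r\cos\varphi)U')'=-r^2(R+r\cos\varphi)\lambda_1 U$ is non-elementary) — only the qualitative facts that $U$ has a unique interior maximum with $U''(\varphi^*)<0$, via Hopf's lemma and monotonicity of $(R+r\cos\varphi)U'$, are used; and your passage ``$G(\phi)=c\,h'(\phi)$'' compresses several non-trivial steps — showing $\partial_\epsilon\lambda_1^\epsilon|_{\epsilon=0}=0$, separating variables, obtaining $V_{\theta\theta}/n^2+V=cU$ from simplicity of $\lambda_1$, and killing the $\cos(n\theta)$ component (via the maximum principle, which is where the threshold $N>\sqrt{\lambda_1}(R+r)$ first enters) — though your overall characterization of these as routine is fair.
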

   
The paper is organized as follows. In Section 2, we consider problem \eqref{Dirichlet Laplacian}  when  $M^+$   is replaced by the upper half $T^+$ of a standard torus $T^2$. We show that the set of the critical points of the principal  eigenfunction of \eqref{Dirichlet Laplacian} equals  a circle in $T^+$. In Section 3, we construct a perturbation $T^+_\epsilon$ of  $T^+$ together with the principal eigenfunction on $T^+_\epsilon$. 
In Section 4, we prove Theorem \ref{main theorem} along with the locations of all the critical points of the principal eigenfunction on $T^+_\epsilon$.

\setcounter{equation}{0}
\setcounter{theorem}{0}
 
   \section{The upper half of a standard torus}
   The parameterization of the upper  half $T^+$ of a standard torus $T^2$ is given by
   \begin{equation}\label{upper half of standard tori}
   \begin{cases}
    x_1=(R+r \cos\varphi ) \cos \theta , \\
    x_2=(R+r \cos\varphi ) \sin \theta,  \hspace{1cm} ( ( \varphi,\theta) \in I^+:=[0,\pi]\times S^1)\\ 
    x_3=r \sin \varphi,
   \end{cases}
   \end{equation}
   where $R, r$ are constants with $R>r>0$ and $S^1$ is the unit circle.
   
   Set $x^1 = \varphi, \, x^2 = \theta$.  Then, the Riemannian metric $g=(g_{ij})$ is given by 
   \begin{equation}
  (g_{ij})_{i,j=1,2} =
   \begin{pmatrix}
   r^2 & 0 \\[0.5cm]
   0 &  (R+r\cos \varphi)^2
   \end{pmatrix}.
   \end{equation}
   The Riemannian gradient $\nabla_{g}u$ of $u$ with respect to $g$ on $T^+$ is given by
    \begin{equation}
   \nabla_{g} u =
   \begin{pmatrix}
   \displaystyle \dfrac{1}{r^2} \partial_\varphi u \\[0.5cm]
   \dfrac{1}{(R+r\cos \varphi)^2}  \partial_\theta u
   \end{pmatrix}.
   \end{equation}
    The area element $d\sigma$ of $T^+$ is given by
   \begin{equation}
   d\sigma=\sqrt{|g|} d\varphi d\theta=r(R+r\cos \varphi)   d\varphi d\theta.
   \end{equation}
   Thus, we can express the Laplace-Beltrami operator $\Delta_{g}$ on $T^+$ as
   \begin{equation}\label{Laplace operator for torus}
   \Delta_{g} u= \frac{1}{r^2} u_{\varphi\varphi}+\frac{1}{(R+r\cos \varphi)^2} u_{\theta\theta} - \frac{\sin \varphi}{r(R+r \cos \varphi)} u_{\varphi}.
   \end{equation}
   
   Let $U$ be the principal eigenfunction of \eqref{Dirichlet Laplacian} where $M^+$ is replaced by $T^+$. By the symmetry of $T^+$ with respect to the center of the hole of $T^+$, $U$ is a function of one variable $\varphi$.
  \begin{proposition}\label{proposition1}
The set of critical points of the principal eigenfunction $U$ of \eqref{Dirichlet Laplacian} on $T^+$  equals a circle in $T^+$ centered at a point belonging to the axis of the symmetry of $T^+$.
  \end{proposition}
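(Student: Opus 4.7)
The plan is to exploit the rotational symmetry of $T^+$ about the $x_3$-axis to reduce the problem \eqref{Dirichlet Laplacian} (with $M^+$ replaced by $T^+$) to a one-dimensional Sturm-Liouville problem, and then to pinpoint the unique critical value of $\varphi$ by a monotonicity argument.

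First, I would observe that the rotations $\theta \mapsto \theta + \theta_0$ form a one-parameter group of isometries of $(T^+, g)$ preserving $\partial T^+$. Since the principal eigenfunction of \eqref{Dirichlet Laplacian} is unique up to sign and chosen positive, $U$ must be invariant under all such rotations, so $U = U(\varphi)$. Using \eqref{Laplace operator for torus}, the PDE in \eqref{Dirichlet Laplacian} then reduces to the ODE
\begin{equation*}
\frac{1}{r^2} U''(\varphi) - \frac{\sin \varphi}{r(R + r\cos \varphi)} U'(\varphi) + \lambda_1 U(\varphi) = 0, \quad \varphi \in (0, \pi),
\end{equation*}
with $U(0) = U(\pi) = 0$ and $U > 0$ on $(0, \pi)$.

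The key step is to cast this ODE in Sturm-Liouville form. Since $\frac{d}{d\varphi}(R + r\cos \varphi) = -r\sin \varphi$, multiplying through by $r^2(R + r\cos \varphi)$ yields
\begin{equation*}
\bigl[(R + r\cos \varphi)\, U'(\varphi)\bigr]' + \lambda_1 r^2 (R + r\cos \varphi)\, U(\varphi) = 0.
\end{equation*}
Setting $F(\varphi) := (R + r\cos \varphi)\, U'(\varphi)$, we have $F'(\varphi) < 0$ throughout $(0, \pi)$, so $F$ is strictly decreasing there. Positivity of $U$ with $U(0) = U(\pi) = 0$ gives $U'(0) \geq 0$ and $U'(\pi) \leq 0$, and standard ODE uniqueness rules out equality (which would force $U \equiv 0$); hence $F(0) > 0 > F(\pi)$. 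Strict monotonicity of $F$ then forces $F$, and therefore $U'$, to vanish at a unique point $\varphi^* \in (0, \pi)$.

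Finally, the critical set of $U$ on $T^+$ is $\{(\varphi^*, \theta) : \theta \in S^1\}$, which via \eqref{upper half of standard tori} is the circle of radius $R + r\cos \varphi^*$ lying in the plane $\{x_3 = r\sin \varphi^*\}$ and centered at $(0, 0, r\sin \varphi^*)$, a point on the $x_3$-axis of symmetry of $T^+$. The only substantive step is recognizing the integrating factor $R + r\cos \varphi$ that brings the ODE into Sturm-Liouville form; once this is in place, the uniqueness of the critical circle is essentially immediate from the monotonicity of $F$.
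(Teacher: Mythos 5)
Your proof is correct and follows essentially the same route as the paper: both cast the reduced ODE in Sturm--Liouville form, observe that $F(\varphi)=(R+r\cos\varphi)\,U'(\varphi)$ is strictly decreasing because $F'=-\lambda_1 r^2(R+r\cos\varphi)U<0$, and use $U'(0)>0>U'(\pi)$ to get a unique zero $\varphi^*$ of $U'$. The only cosmetic difference is that you invoke ODE uniqueness to exclude $U'(0)=0$ or $U'(\pi)=0$, whereas the paper cites Hopf's boundary point lemma; both are valid, and the paper additionally records $U''(\varphi^*)<0$ (immediate from the ODE at $\varphi^*$) since that fact is reused later.
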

\begin{proof}
Since $U>0$ in $(0,\pi)$ and $U$ satisfies that for every $\varphi \in (0,\pi)$ 
\begin{equation}
\Bigl((R+r\cos \varphi) U^{\prime}\Bigr)^\prime=-r^2(R+r \cos \varphi)\lambda_1U < 0,
\end{equation}
we see that $(R + r \cos \varphi)U^\prime(\varphi)$ is strictly decreasing in $(0, \pi)$.	By applying  Hopf's boundary point lemma to \eqref{Dirichlet Laplacian} on $T^+$, we have that$$ U^\prime(0) > 0 > U^\prime(\pi). $$These conditions give rise to the existence of a unique point  $\varphi^* \in (0,\pi)$ such that 
\begin{equation}\label{u prime}
U^\prime(\varphi^*)=0 \mbox{ and } U^{\prime\prime}(\varphi^*)<0.
\end{equation}
Hence, we conclude that the set of critical points of $U$ of \eqref{Dirichlet Laplacian} on $T^+$ corresponds to $$(\varphi, \theta) \in \lbrace \varphi^*\rbrace \times S^1.$$
This completes the proof.
\end{proof}
\setcounter{equation}{0}
\setcounter{theorem}{0}
\section{The upper half of a perturbed torus}
%
Let  us introduce  a small perturbation $T^+_\epsilon$ of $T^+$ parameterized by 
 \begin{equation} \label{parametrization of perturbed tori}
\begin{cases}
\ \ x_1=(R+r_\epsilon(\theta) \cos\varphi ) \cos \theta , \\
\ \ x_2=(R+r_\epsilon(\theta) \cos\varphi ) \sin \theta,  \hspace{1cm} ( ( \varphi,\theta) \in I^+)\\ 
\ \ x_3=r_\epsilon(\theta) \sin \varphi,
\end{cases}
\end{equation}
where  $n\in \mathbb{N}$, $r_\epsilon(\theta)=r+\epsilon \sin (n\theta)$,  and the constants $R, r, \varepsilon$ satisfy $R> r+|\epsilon|=\max\limits_{\theta \in S^1}  r_\epsilon(\theta)$. Notice that $T^+_0=T^+$. 

Set $x^1=\varphi,\ x^2=\theta$. Then, the Riemannian metric $g^\epsilon=(g^\epsilon_{ij})$ is given by
\begin{equation}
(g^\epsilon_{ij})_{i,j=1,2} =
\begin{pmatrix}
r_\epsilon^2(\theta) & 0\\[0.5cm]
0 &  (R+r_\epsilon(\theta)\cos \varphi)^2+(r_\epsilon^\prime(\theta))^2 
\end{pmatrix}.
\end{equation}
The Riemannian gradient $\Delta_{g^\epsilon}u$ of $u$ with respect to $g^\epsilon$ on $T^+_\epsilon$ 
is given by
\begin{equation}\label{gradient}
\nabla_{g^\epsilon} u =
\begin{pmatrix}
\displaystyle \dfrac{1}{r_\epsilon^2 (\theta)} \partial_\varphi u \\[0.5cm]
\dfrac{1}{(R+r_\epsilon(\theta)\cos \varphi)^2+(r_\epsilon^\prime(\theta))^2}  \partial_\theta u
\end{pmatrix},
\end{equation}
 The area element $d\sigma_\epsilon$ of $T^+_\epsilon$ is given by
 \begin{equation}
 d\sigma_\epsilon=\sqrt{|g^\epsilon|} d\varphi d\theta=r_\epsilon(\theta) \sqrt{ (R+r_\epsilon(\theta)\cos \varphi)^2+(r_\epsilon^\prime(\theta))^2 }   d\varphi d\theta.
 \end{equation}
Hence, the Laplace-Beltrami operator $\Delta_{g^\epsilon}$ on $T^+_\epsilon$ can be expressed as
\begin{equation}\label{LB}
\Delta_{g^\epsilon} u=  \frac 1{r_\epsilon^2(\theta)}u_{\varphi\varphi} +\frac 1{\Phi^2}u_{\theta\theta} + \frac{\Phi_\varphi}{r_\epsilon^2(\theta) \Phi}u_\varphi+\frac{r_\epsilon^\prime(\theta)\Phi-r_\epsilon(\theta)\Phi_\theta}{r_\epsilon(\theta)\Phi^3}u_\theta.
\end{equation}
where we set $\Phi = \Phi(\varphi, \theta) = \sqrt{ (R+r_\epsilon(\theta)\cos \varphi)^2+(r_\epsilon^\prime(\theta))^2 }$.

Let $u^\epsilon=u^\epsilon(\varphi,\theta)$ be the principal eigenfunction of \eqref{Dirichlet Laplacian} where $M^+$ is replaced by $T^+_\epsilon$. Then $u^\epsilon$ satisfies
\begin{numcases}{}
\Delta_{g^\epsilon}u^\epsilon+\lambda_1^\epsilon u^\epsilon=0 \ \mbox{ in } T^+_\epsilon,\label{Dirichlet Laplace epsilon} \\[7pt]
u^\epsilon > 0\ \mbox{ in } T^+_\epsilon, \quad u^\epsilon=0\ \mbox{ on }\partial T^+_\epsilon,\\[7pt]
\|u^\epsilon\|_{L^2(T^+_\epsilon)}=1. \label{l2}
\end{numcases}
Notice that $u^0=U$ and $\lambda_1^0=\lambda_1$. 

By applying the implicit function theorem to problem \eqref{Dirichlet Laplace epsilon}--\eqref{l2} as in \cite[Example 3.2, pp. 32--33]{henry}, we see that the principal eigenfunction $u^\epsilon$ on $\overline{T^+_\epsilon}$ is  close to the principal eigenfunction $U$ in $\overline{T^+}$ in $C^2$-topology for sufficiently small $|\epsilon|$. 
\begin{proposition}\label{implicit function theorem}
	Let  $u^\epsilon$ be the principal eigenfunction of \eqref{Dirichlet Laplacian} where $M^+$ is replaced by $T^+_\epsilon$. Then,  there exists $\epsilon_0 > 0$ such that for each $|\epsilon| \in (0,\epsilon_0)$,  $\delta_0 (\epsilon) > 0$ with $\lim\limits_{\epsilon\rightarrow 0} \delta_0(\epsilon)=0$ exists and satisfies
	 \begin{equation}
	 \|u^\epsilon -U \|_{C^{2}(I^+)} < \delta_0(\epsilon), \mbox{ if } |\epsilon| \in (0,\epsilon_0),
	 \end{equation}
	where $U$ is the principal eigenfunction of \eqref{Dirichlet Laplacian} on $T^+$ given by {\rm Proposition \ref{proposition1}}. 
\end{proposition}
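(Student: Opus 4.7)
The plan is to apply the implicit function theorem in Hölder spaces, exploiting the fact that the parametrization \eqref{parametrization of perturbed tori} already identifies every $T^+_\epsilon$ with the single fixed parameter domain $I^+$. Under this identification the coefficients of $\Delta_{g^\epsilon}$ read off from \eqref{LB} depend analytically on $\epsilon$ and smoothly on $(\varphi,\theta)$, and the Dirichlet boundary $\partial I^+=\{0,\pi\}\times S^1$ does not move with $\epsilon$. Accordingly, I would recast \eqref{Dirichlet Laplace epsilon}--\eqref{l2} as a smooth family, in $\epsilon$, of nonlinear equations on $I^+$.

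Concretely, with $\alpha\in(0,1)$ fix the Banach spaces $X=\{u\in C^{2,\alpha}(I^+):u=0\text{ on }\{0,\pi\}\times S^1,\ u\text{ is }2\pi\text{-periodic in }\theta\}$ and $Y=\{f\in C^{\alpha}(I^+):f\text{ is }2\pi\text{-periodic in }\theta\}$, and define
\begin{equation*}
F:X\times\mathbb{R}\times(-\epsilon_1,\epsilon_1)\to Y\times\mathbb{R},\qquad F(u,\lambda,\epsilon)=\Bigl(\Delta_{g^\epsilon}u+\lambda u,\ \|u\|_{L^2(T^+_\epsilon)}^{2}-1\Bigr).
\end{equation*}
Then $F(U,\lambda_1,0)=0$ by Proposition \ref{proposition1}, and the analytic dependence of the coefficients and of $d\sigma_\epsilon$ on $\epsilon$ makes $F$ of class $C^1$ jointly in its arguments.

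The main step, which is also the main obstacle, is to verify that the partial derivative
\begin{equation*}
L:=D_{(u,\lambda)}F(U,\lambda_1,0):X\times\mathbb{R}\to Y\times\mathbb{R},\qquad L(v,\mu)=\bigl(\Delta_g v+\lambda_1 v+\mu U,\ 2\langle v,U\rangle_{L^2(T^+)}\bigr),
\end{equation*}
is an isomorphism. Here I would invoke the Fredholm alternative: $\Delta_g+\lambda_1$, acting from $X$ to $Y$ with the Dirichlet condition, is a self-adjoint Fredholm operator of index $0$ whose kernel and cokernel are both spanned by $U$, because the principal eigenvalue is simple. Thus, given $(f,c)\in Y\times\mathbb{R}$, the choice $\mu=-\langle f,U\rangle/\|U\|_{L^2}^{2}$ renders $f-\mu U$ orthogonal to the cokernel, producing a solution $v\in X$ unique up to a multiple of $U$, which is then fixed by imposing $2\langle v,U\rangle=c$. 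Injectivity and surjectivity of $L$ follow, and the open mapping theorem supplies the continuous inverse.

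With $L$ an isomorphism, the implicit function theorem produces $\epsilon_0\in(0,\epsilon_1)$ and $C^1$ curves $\epsilon\mapsto u^\epsilon\in X$, $\epsilon\mapsto\lambda_1^\epsilon\in\mathbb{R}$ on $(-\epsilon_0,\epsilon_0)$ with $u^0=U$, $\lambda_1^0=\lambda_1$, and $F(u^\epsilon,\lambda_1^\epsilon,\epsilon)=0$. Positivity of $U$ and continuity of the branch guarantee $u^\epsilon>0$ in $T^+_\epsilon$ for small $|\epsilon|$, so by the simplicity of the principal eigenvalue, $u^\epsilon$ coincides with the principal eigenfunction of \eqref{Dirichlet Laplacian} on $T^+_\epsilon$. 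Finally, the continuous embedding $X\hookrightarrow C^2(I^+)$ combined with the continuity $\epsilon\mapsto u^\epsilon\in X$ lets one set $\delta_0(\epsilon):=\|u^\epsilon-U\|_{C^2(I^+)}$, which tends to $0$ as $\epsilon\to 0$, yielding the stated estimate.
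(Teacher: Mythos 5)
Your proposal is correct and follows the same route the paper indicates: both invoke the implicit function theorem after the parametrization pulls the problem back to the fixed domain $I^+$, with the paper simply referring to Henry's book for the details while you supply the Hölder-space setup, the Fredholm-alternative argument using the simplicity of $\lambda_1$ to invert the linearization, and the normalization constraint. The only slip is a sign: to make $f-\mu U$ orthogonal to the cokernel $\mathrm{span}\{U\}$ one needs $\mu=\langle f,U\rangle_{L^2}/\|U\|_{L^2}^{2}$ rather than its negative; this is immaterial to the argument.
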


\setcounter{equation}{0}
\setcounter{theorem}{0}
 \section{Proof of Theorem \ref{main theorem}.}
 Let  $u^\epsilon$ be the principal eigenfunction of \eqref{Dirichlet Laplacian} where $M^+$ is replaced by $T^+_\epsilon$ for $ |\epsilon| \in (0,\epsilon_0)$ as in Proposition \ref{implicit function theorem}.
Then the uniqueness of the principal eigenfunction $u^\epsilon$,  
together with the symmetry of $T^+_\epsilon$,  provides us the symmetry of $u^\epsilon$ with respect to $T_\epsilon^+ \cap H$ for the following $n$ planes $H$:
\begin{equation}
H=\lbrace -x_1 \sin \theta_k+x_2 \cos \theta_k =0\rbrace\ \mbox{ with } k=0,1,\dots,n-1,
\end{equation}
where $\theta_k=\dfrac{2k+1}{2n}\pi$, and hence
\begin{equation}\label{symmetry}
\dfrac{\partial u^\epsilon}{\partial \theta}= 0\ \mbox{ for every } (\varphi, \theta) \in [0, \pi] \times \lbrace \theta_k, \ k = 0, 1, ..., 2n -1\rbrace.
\end{equation}
From \eqref{u prime} and Proposition \ref{implicit function theorem}, the following conditions hold:
Let $\delta > 0$ be sufficiently small. There exists $\epsilon_1 \in (0, \epsilon_0)$ such that if $|\epsilon|\leq \epsilon_1$, then
\begin{equation}\label{detivative}
\begin{cases}
\dfrac{\partial u^\epsilon}{\partial \varphi}>0 \ \mbox{ for every } \varphi\in[0,\varphi^*-\delta],\\[12pt]
\dfrac{\partial u^\epsilon}{\partial \varphi}<0 \  \mbox{ for every } \varphi\in[\varphi^*+\delta, \pi],\\[12pt]
\dfrac{\partial^2 u^\epsilon}{\partial \varphi^2}<0 \ \mbox{ for every } \varphi\in[\varphi^*-2\delta,\varphi^*+2\delta].
\end{cases}
\end{equation}
These conditions imply the  existence  of a unique point $\hat{\varphi}(\epsilon, \theta) \in (\varphi^*-\delta,\varphi^*+\delta)$ with $\hat{\varphi}(0, \theta)\equiv \varphi^*$ satisfying, if  $|\epsilon|\leq \epsilon_1$ then
\begin{equation}\label{vhat}
\dfrac{\partial u^\epsilon}{\partial \varphi}(\hat{\varphi}(\epsilon, \theta), \theta)=0\ \mbox{ for every } \theta \in S^1.
\end{equation}

	From \eqref{detivative} and \eqref{vhat}, if $|\epsilon|\leq \epsilon_1$, we reassure that the set of critical points of the function $u^\epsilon$ on $I^+\left(=[0,\pi] \times S^1\right)$ is contained in $(\varphi^*-\delta,\varphi^*+\delta)\times S^1$.
	To obtain all the critical points of the principal eigenfunction $u^\epsilon$ on $T^+_\epsilon$, it suffices to examine the derivatives of $u^\epsilon$ with respect to $\theta$ up to the second order only for $\varphi \in [\varphi^*-\delta,\varphi^*+\delta]\times S^1$.
	
	We may express  the principal eigenfunction $u^\epsilon$ as
\begin{equation}\label{u epsilon}
u^\epsilon=U+\epsilon\dfrac{\partial u^\epsilon}{\partial \epsilon}\bigg|_{\epsilon=0}+ o(\epsilon) \mbox{ as } \epsilon \rightarrow 0.
\end{equation}
Set $V=\dfrac{\partial u^\epsilon}{\partial \epsilon}\bigg|_{\epsilon=0}$. By the Dirichlet boundary condition of $u^\epsilon$, we have
\begin{equation}\label{boundary V}
V(0,\theta)=V(\pi,\theta)=0 \ \mbox{ for every }\theta \in S^1.
\end{equation}
By \eqref{u epsilon}, the derivatives of $u^\epsilon$ with respect of $\theta$ up to the second order are given by
\begin{equation}\label{uv}
\dfrac{\partial u^\epsilon}{\partial \theta}=\epsilon V_\theta + o(\epsilon) \quad \mbox{ and } \quad \dfrac{\partial^2 u^\epsilon}{\partial \theta^2}=\epsilon V_{\theta \theta}+ o(\epsilon) \quad \mbox{ as } \epsilon \rightarrow 0.
\end{equation}

Let us differentiate \eqref{Dirichlet Laplace epsilon} with respect to $\epsilon$.
\begin{align*}
0=& \frac 1{r_\epsilon^2}u^\epsilon_{\varphi\varphi\epsilon} +\frac {\partial}{\partial\epsilon}\left(\frac 1{r_\epsilon^2}\right) u^\epsilon_{\varphi\varphi} +\frac 1{\Phi^2}u^\epsilon_{\theta\theta\epsilon}+\frac {\partial}{\partial\epsilon}\left( \frac 1{\Phi^2}\right) u^\epsilon_{\theta\theta} \\
&+\frac{\Phi_\varphi}{r_\epsilon^2 \Phi}u^\epsilon_{\varphi\epsilon}+\frac {\partial}{\partial\epsilon}\left(\frac{\Phi_\varphi}{r_\epsilon^2 \Phi}\right) u^\epsilon_\varphi\\
&+\frac{r_\epsilon^\prime\Phi-r_\epsilon\Phi_\theta}{r_\epsilon\Phi^3}u^\epsilon_{\theta\epsilon}+\frac {\partial}{\partial\epsilon}\left(\frac{r_\epsilon^\prime\Phi-r_\epsilon\Phi_\theta}{r_\epsilon\Phi^3}\right) u^\epsilon_\theta+ \frac{\partial \lambda_1^\epsilon}{\partial \epsilon} u^\epsilon+\lambda_1^\epsilon \frac{\partial u^\epsilon}{\partial \epsilon}.
\end{align*}
Then, we set $\epsilon=0$ to obtain
\begin{equation}\label{derivative V}
\Delta_{g} V + \lambda_1V+ \frac{\partial \lambda_1^\epsilon}{\partial \epsilon}\bigg|_{\epsilon=0} U=\frac 2r \sin (n\theta) \left[-\lambda_1 U+ \dfrac{R\sin \varphi}{2r(R+r\cos \varphi)^2} U_{\varphi} \right]
\end{equation}
for every $(\varphi,\theta) \in I^+$. Multiply both sides of \eqref{derivative V} by $U$ and integrate them on $I^+$ to obtain
\begin{equation}\label{integral V}
\int\limits_{I^+} \left\lbrace \bigg(\Delta_g V+\lambda_1 V\bigg) U+ \dfrac{\partial \lambda_1^\epsilon}{\partial \epsilon}\bigg|_{\epsilon=0} U^2\right\rbrace \,d\sigma=0.
\end{equation}
By using the integration by parts and the fact that $\int\limits_{I^+} U^2 d\sigma=1$, we obtain 
\begin{equation*}
\int\limits_{I^+} \bigg(\Delta_g U+\lambda_1 U\bigg) V d\sigma+ \dfrac{\partial \lambda_1^\epsilon}{\partial \epsilon}\bigg|_{\epsilon=0} \,=0.
\end{equation*}
Since $\Delta_g U+\lambda_1 U=0$, $\dfrac{\partial \lambda_1^\epsilon}{\partial \epsilon}\bigg|_{\epsilon=0}=0$. Hence, for every $ (\varphi,\theta)\in I^+$
\begin{equation} \label{eqv1}
\hspace*{-0.4cm}	\Delta_g V+\lambda_1 V=\dfrac{2}{r}\sin (n\theta)\left[-\lambda_1U +\dfrac{R \sin \varphi}{2r(R+r\cos \varphi)^2}U_\varphi\right].
\end{equation}
The right-hand side of \eqref{eqv1} can be regarded as a product of a function of one variable $\varphi$ and  a function of one variable $\theta$ and all the coefficients of the left-hand side of \eqref{eqv1} are independent of $\theta$. Moreover, the right-hand side of \eqref{eqv1} is infinitely differentiable. Then, by the standard regularity theory for elliptic partial differential equations (see \cite{Gilbarg-Trudinger}), we may differentiate both sides of \eqref{eqv1} with respect to $\theta$ twice to obtain

\begin{equation}\label{persamaanV}
\Delta_g \bigg(\dfrac{V_{\theta \theta}}{n^2}+V\bigg) +\lambda_1  \bigg(\dfrac{V_{\theta \theta}}{n^2}+V\bigg) =0\ \text{ in } T^+.
\end{equation}
In addition, by \eqref{boundary V}
\begin{equation}\label{boundary V ode}
\dfrac{V_{\theta \theta}}{n^2}+V=0\ \text{ on } \partial T^+. 
\end{equation} 
Since the principal eigenfunction $U$ of \eqref{Dirichlet Laplacian} on $T^+$ is unique, \eqref{persamaanV} together with \eqref{boundary V ode} yields that
\begin{equation}\label{V ode}
\dfrac{V_{\theta \theta}}{n^2}+V=c U
\end{equation}
for some constant $c\in \mathbb{R}$. Hence, the function $V$ can be expressed as
\begin{equation}\label{definitionofv}
V(\varphi, \theta)= C_1(\varphi) \cos (n\theta)+C_2(\varphi)\sin (n\theta)+cU\ \text{ for every } (\varphi,\theta)\in I^+,
\end{equation}
for some functions $C_1(\varphi)$, $C_2(\varphi)$ of class $C^2$. By \eqref{boundary V}, we have 
\begin{equation}\label{nilai C}
C_1(0)=C_2(0)=C_1(\pi)=C_2(\pi)=0.
\end{equation}

Let us substitute \eqref{definitionofv} into \eqref{eqv1}. Then, $C_1, C_2$ satisfy the following two ordinary differential equations:
\begin{align}
&C^{\prime \prime}_1(\varphi) -\dfrac{r\sin \varphi}{R+r\cos \varphi}C^\prime_1(\varphi)- B_n(\varphi) C_1(\varphi)=0,\label{odec1}\\
&C^{\prime \prime}_2(\varphi) -\dfrac{r\sin \varphi}{R+r\cos \varphi}C^\prime_2(\varphi)-B_n(\varphi) C_2(\varphi)=A(\varphi),\label{odec2}
\end{align}
where we set
$$
A(\varphi)=2r\left[-\lambda_1 U+ \dfrac{R\sin \varphi}{2r(R+r\cos \varphi)^2} U_{\varphi} \right] \ \mbox{ and }\ B_n(\varphi)=r^2\left\lbrack \dfrac{n^2}{(R+r\cos \varphi)^2} -\lambda_1\right\rbrack.
$$
Choose $N \in \mathbb{N}$ in Theorem \ref{main theorem} as $N>  \sqrt{\lambda_1} (R+r)$. Then, for every $n \geq N$, $B_n(\varphi)$ is positive in $(0,\pi)$. Let $n \geq N$. Hence,  by applying the maximum principle to \eqref{odec1} we have that $C_1 \equiv 0$ .
Therefore, we have 
\begin{equation}\label{definition of v}
V(\varphi, \theta)= C_2(\varphi)\sin (n\theta)+cU\  \text{ for every } (\varphi,\theta)\in I^+.
\end{equation}

To evaluate $V(\varphi,\theta)$ for  $\varphi\in [\varphi^*-\delta,\varphi^*+\delta]$, we need to evaluate $C_2(\varphi)$ for  $\varphi \in [\varphi^*-\delta,\varphi^*+\delta]$. 
 In Proposition \ref{value c} below, we will show that $C_2(\varphi^*)$ is positive. Then,  
the continuity of $C_2$ gives us $C_2(\varphi)>0$ for every   $\varphi \in [\varphi^*-\delta,\varphi^*+\delta]$, provided that $\delta > 0$ is chosen sufficiently small.

An elementary calculation of  the derivatives of \eqref{definition of v} with respect to $\theta$ up to the second order shows that,  for every $\varphi \in [\varphi^*-\delta,\varphi^*+\delta]$
\begin{equation}
\begin{cases}
V_\theta(\varphi, \theta)\neq 0 \quad \text{ if } \quad  \theta\notin \lbrace \theta_k\big| k=0,1,2,...,2n-1\rbrace,\\
V_{\theta\theta}(\varphi, \theta) \neq 0 \quad \text{if } \quad  \theta\in \lbrace \theta_k\big| k=0,1,2,...,2n-1\rbrace.\\
\end{cases}
\end{equation}
Then, it follows from  \eqref{symmetry} and \eqref{uv} that there exists $\epsilon_2\in (0,\epsilon_1)$ such that if $|\epsilon|<\epsilon_2$,  for every $\varphi\in [\varphi^*-\delta,\varphi^*+\delta]$
\begin{equation*}\label{utt}
\hspace*{-0.0cm}
\begin{cases}
\dfrac{\partial u^\epsilon}{\partial \theta}(\varphi, \theta)\neq 0 \quad \text{ if } \quad  \theta\notin \lbrace \theta_k\big| k=0,1,2,...,2n-1\rbrace, \\[10pt]
\dfrac{\partial^2 u^\epsilon}{\partial \theta^2}(\varphi, \theta)>0\quad \text{if } \quad  \theta \in \mathcal{N}_k, \, k\text{ is an odd number in } \lbrace0,1,..., 2n-1\rbrace,\\[10pt]
\dfrac{\partial^2 u^\epsilon}{\partial \theta^2}(\varphi, \theta)<0 \quad \text{if } \quad \theta \in \mathcal{N}_k , \,k  \text{ is an even number in } \lbrace0,1,..., 2n-1\rbrace,
\end{cases}
\end{equation*}
where  $\mathcal{N}_k =[\theta_k-\delta, \theta_k+\delta]$.

We conclude that the set of critical points of the principal eigenfunction $u^\epsilon$ on $T^+_\epsilon$ corresponds to:
\begin{equation*}
\hspace{-1mm} (\varphi,\theta)\in  \left\lbrace (\hat{\varphi}(\epsilon, \theta_k),\theta_k) \big|\, k = 0, 1, . . . , 2n- 1\right\rbrace,
\end{equation*}
which consists of exactly $2n$ critical points in $T^+_\epsilon$. This completes the proof. \qed

\begin{proposition}\label{value c}
	$C_2(\varphi^*)$ is strictly positive.
\end{proposition}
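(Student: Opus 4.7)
The plan is to treat equation~\eqref{odec2} as a singularly perturbed two‑point boundary value problem in the large parameter $n$: the zeroth‑order coefficient $B_n(\varphi)=r^2\bigl[n^2/(R+r\cos\varphi)^2-\lambda_1\bigr]$ is of order $n^2$ uniformly on $[0,\pi]$ once $n\ge N>\sqrt{\lambda_1}(R+r)$. The strategy is to compare $C_2$ with the algebraic ansatz
\[
\widetilde C(\varphi)\;:=\;-\frac{A(\varphi)}{B_n(\varphi)},
\]
and to deduce the sign of $C_2(\varphi^*)$ from that of $\widetilde C(\varphi^*)$, which can be read off directly.

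Two observations drive this choice. First, by \eqref{u prime}, $U_\varphi(\varphi^*)=0$ and $U(\varphi^*)>0$, so
\[
A(\varphi^*)\;=\;-2r\lambda_1\,U(\varphi^*)\;<\;0 \quad\text{and}\quad \widetilde C(\varphi^*)\;=\;\frac{2r\lambda_1\,U(\varphi^*)}{B_n(\varphi^*)}\;>\;0,
\]
with $\widetilde C(\varphi^*)$ of precise order $n^{-2}$. Second, because $U(0)=U(\pi)=0$ and $\sin 0=\sin\pi=0$ kill both summands of $A$ at the endpoints, $A(0)=A(\pi)=0$ while $B_n$ stays bounded away from zero, so $\widetilde C$ automatically satisfies the Dirichlet conditions \eqref{nilai C}; no boundary‑layer correction is needed.

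To control the error $w:=C_2-\widetilde C$, I would set $L:=\partial_\varphi^2-\frac{r\sin\varphi}{R+r\cos\varphi}\partial_\varphi-B_n$, so that $LC_2=A$ by \eqref{odec2} and $L\widetilde C-A=\widetilde C''-\frac{r\sin\varphi}{R+r\cos\varphi}\widetilde C'$. A direct expansion of $-A/B_n$ and its derivatives, using that $A,A',A''$ are bounded while $B_n,B_n',B_n''$ are all $O(n^2)$, yields $\widetilde C',\widetilde C''=O(n^{-2})$ uniformly on $[0,\pi]$, hence $\|L\widetilde C-A\|_\infty=O(n^{-2})$. The function $w$ then satisfies $Lw=-(L\widetilde C-A)$ with $w(0)=w(\pi)=0$, and since the zeroth‑order coefficient $-B_n$ of $L$ is bounded above by $-B_n^{\min}\sim -n^2$, a constant‑barrier application of the maximum principle gives
\[
\|w\|_\infty \;\le\; \frac{\|Lw\|_\infty}{B_n^{\min}} \;=\; O(n^{-4}).
\]

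Combining these three steps, $C_2(\varphi^*)=\widetilde C(\varphi^*)+O(n^{-4})$, and since $\widetilde C(\varphi^*)$ is strictly positive of order $n^{-2}$, this forces $C_2(\varphi^*)>0$ for all sufficiently large $n$, possibly after enlarging the constant $N$ in Theorem~\ref{main theorem}. The main technical obstacle is the uniform bookkeeping of $n$‑powers in the residual; the argument succeeds without a matched asymptotic expansion precisely because $-A/B_n$ already meets the Dirichlet data at $\varphi=0,\pi$, reducing the whole problem to a clean interior comparison.
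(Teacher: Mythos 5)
Your proposal is correct, and it takes a genuinely different route from the paper.

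The paper's proof is a qualitative phase-plane argument by contradiction: assuming $C_2(\varphi^*)\le 0$, one first rules out $C_2'(\varphi^*)\le 0$ by producing an interior minimum $\eta\in(\varphi^*,\pi)$ that is incompatible with \eqref{odec2}, then shows $C_2'(\varphi^*)>0$ forces $C_2$ to remain negative and increasing on $(\gamma,\varphi^*)$ where $\gamma$ is the left edge of the interval on which $A<0$, picks up a quantitative gap $C_2(\gamma)<-\tau$, and finally propagates that gap back to $\varphi=0$, contradicting $C_2(0)=0$; the last step also enlarges $N$ so that $B_n>A/\tau$. Your proof is instead a singular-perturbation comparison: the outer approximant $\widetilde C=-A/B_n$ already meets the Dirichlet data because $A(0)=A(\pi)=0$, the residual $L\widetilde C-A=\widetilde C''-\frac{r\sin\varphi}{R+r\cos\varphi}\widetilde C'$ is $O(n^{-2})$ since $A,A',A''$ are $n$-independent while $B_n,B_n',B_n''$ are all comparable to $n^2$, and a constant-barrier maximum-principle estimate then gives $\|C_2-\widetilde C\|_\infty\le\|L\widetilde C-A\|_\infty/\min B_n=O(n^{-4})$. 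Since $\widetilde C(\varphi^*)=2r\lambda_1 U(\varphi^*)/B_n(\varphi^*)=\Theta(n^{-2})>0$, the sign of $C_2(\varphi^*)$ follows for $n$ large. Both arguments end up requiring $N$ to be enlarged beyond $\sqrt{\lambda_1}(R+r)$ by an amount determined by $n$-independent quantities, so neither is more economical there; what your approach buys is an explicit leading-order asymptotic $C_2(\varphi^*)\sim 2\lambda_1 U(\varphi^*)(R+r\cos\varphi^*)^2/(rn^2)$ and a shorter, more transparent proof, while the paper's approach stays purely qualitative and does not invoke asymptotics in $n$. One small point worth making explicit in your write-up: the maximum-principle step requires that the homogeneous Dirichlet problem for $L$ on $(0,\pi)$ has only the trivial solution, which follows from $B_n>0$ on $(0,\pi)$ (the same fact the paper uses to kill $C_1$ via \eqref{odec1}), so the barrier bound is indeed valid.
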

\begin{proof}
	 Suppose on the contrary that $C_2(\varphi^*)\leq 0$.  We will show that $C^\prime_2(\varphi^*) >0$.
	 
	 Suppose that  $C^\prime_2(\varphi^*) =0$.  Then, from \eqref{odec2} we have
	 \begin{align*}
	 C^{\prime \prime}_2 (\varphi^*)&=B_n(\varphi^*) C_2(\varphi^*)+A(\varphi^*) \\
	 &\leq A(\varphi^*) =-2r\lambda_1 U(\varphi^*)<0.
	 \end{align*}
	 Since $C_2(\pi)=0$, there exists a point $\eta \in (\varphi^*, \pi)$ that satisfies
	 \begin{equation}\label{propc}
	 \begin{cases}
	 C_2 ( \eta) = \displaystyle \min_{[\varphi^*, \pi]} C_2 <0,\\
	 C^\prime_2 ( \eta)=0, \quad C^{\prime \prime}_2 ( \eta)\geq 0.
	 \end{cases}
	 \end{equation}
	 Substituting $\eta$ into the left-hand side of \eqref{odec2} yields
	 \begin{equation}\label{cmiu}
	 C^{\prime \prime}_2(\eta) -\dfrac{r\sin \eta}{R+r\cos \eta}C^\prime_2(\eta)-B_n(\eta) C_2(\eta) > 0.
	 \end{equation}
On the other hand,
	 \begin{equation}\label{Amiu}
	 A(\eta)=2r\left[-\lambda_1 U(\eta)+ \dfrac{R\sin \eta}{2r(R+r\cos \eta)^2} U_{\varphi}(\eta) \right] <0,
	 \end{equation}
	 which is a contradiction. Hence, $C_2^\prime(\varphi^*) \neq 0$.

	 Next, we suppose that $C_2^\prime(\varphi^*) <0$. Since $C_2(\varphi^*) \leq 0$ and $C_2(\pi)=0$, there exists $\eta \in (\varphi^*, \pi)$ satisfying  \eqref{propc}. Exactly in the same way as  in the previous case, we get a contradiction. Eventually, we conclude that  $C_{2}^{\prime} (\varphi^*) >0$. Under this circumstance, we shall prove that $C_2(\varphi)< 0$ for every $\varphi \in  [0, \varphi^*)$, which contradicts $C_2(0)=0$.
	 	 
	 Recall that $A(\varphi^*)<0$. Set
	 \begin{equation}
	 \gamma=\inf \bigg\{\Lambda \in (0,\varphi^*) \, \bigg| \, A(\varphi) < 0\  \mbox{ for every } \varphi \in (\Lambda , \varphi^*)\bigg\}.
	 \end{equation}
	 Then, $0\leq \gamma < \varphi^*$. Note that $\gamma$ is independent of $n$.
	Since $A(0)=0$, it follows that $A(\gamma)=0$. Equation \eqref{odec2} is represented as
	\begin{align}
	\bigg((R+r\cos \varphi)C^{\prime }_2(\varphi)\bigg)^\prime=(R+r\cos \varphi) \bigg[B_n(\varphi) C_2(\varphi)+A(\varphi)\bigg].\label{odec22}
	\end{align}
	
 Since   $C_2(\varphi^*)\leq 0$ and $C_{2}^{\prime} (\varphi^*) >0$, there exists $\beta \in (0,\varphi^*-\gamma)$ such that 
 \begin{equation}
 C_2 (\varphi)<0\  \mbox{ for every }\varphi\in(\varphi^*-\beta,\varphi^*). 
 \end{equation}
 If $\varphi \in (\varphi^*-\beta,\varphi^*)$,  then
	\begin{equation*}
	\int\limits_{\varphi}^{\varphi^*} \bigg((R+r\cos h)C^{\prime }_2(h)\bigg)^\prime dh=\int\limits_{\varphi}^{\varphi^*} (R+r\cos h) \bigg[B_n(h) C_2(h)+A(h)\bigg] dh < 0,
	\end{equation*}
	with the result that
$$
(R+r\cos \varphi)C^{\prime }_2(\varphi)> (R+r\cos \varphi^*)C^{\prime }_2(\varphi^*)>0\ \mbox{ for every }\varphi \in (\varphi^*-\beta,\varphi^*).
$$ 
Hence,
	\begin{equation*}
	C^{\prime }_2(\varphi)>0 \mbox{ and } C_2(\varphi) < C_2(\varphi^*)\leq 0\  \mbox{ for every } \varphi \in (\varphi^*-\beta,\varphi^*).
	\end{equation*}
	
	Set
	\begin{equation} 
	H=\inf \bigg\{\Lambda \in (\gamma,\varphi^*) \, \bigg| \, C^\prime_2(\varphi) > 0\ \mbox{ for every } \varphi \in (\Lambda , \varphi^*)\bigg\}.
	\end{equation}
	Then $\gamma\le H<\varphi^*$.
		Let us show that $H=\gamma$. 
	  For this purpose, we suppose that $H>\gamma$.  
	From the definition of $H$, 
	\begin{equation}
	C_2^\prime (\varphi)>0 \mbox{ and } C_2(\varphi) < C_2(\varphi^*)\leq 0\ \mbox{ for every }\varphi\in(H,\varphi^*).
	\end{equation}
	Integrating  \eqref{odec22} in $\varphi$ from $H$ to $\varphi^*$ yields that
		\begin{align*}
	\int\limits_{H}^{\varphi^*} \bigg((R+r\cos h)C^{\prime }_2(h)\bigg)^\prime dh=\int\limits_{H}^{\varphi^*} (R+r\cos h) \bigg[B_n(h) C_2(h)+A(h)\bigg] dh <0, 
	\end{align*}
	and hence
		$$(R+r\cos H)C^{\prime }_2(H)> (R+r\cos \varphi^*)C^{\prime }_2(\varphi^*)>0.$$
	 	Then $C_2^\prime(H)>0$. This contradicts the definition of $H$. Therefore $H=\gamma$.  

	 	Since $C_2(\varphi)<0$ for every $\varphi \in (\gamma, \varphi^*)$,  if $\varphi \in (\gamma, \varphi^*)$ then
	 	\begin{align*}
	 	\int\limits_{\varphi}^{\varphi^*} \bigg((R+r\cos h)C^{\prime }_2(h)\bigg)^\prime dh&=\int\limits_{\varphi}^{\varphi^*} (R+r\cos h) \bigg[B_n(h) C_2(h)+A(h)\bigg] dh \\
	 	&<\int\limits_{\varphi}^{\varphi^*} (R+r\cos h) A(h)\, dh.
	 	\end{align*}
	 	 Let us set
	 	$$K(\varphi)=\displaystyle \int\limits_{\varphi}^{\varphi^*} (R+r\cos h) A(h) \,dh\  \mbox{ for every }\varphi \in (\gamma, \varphi^*).$$ 
	 	Since $A(\varphi)<0$ for every $\varphi \in (\gamma, \varphi^*)$, we have
	 	$$ (R+r\cos \varphi^*)C^{\prime }_2(\varphi^*)-(R+r\cos \varphi)C^{\prime }_2(\varphi)< K(\varphi)<0 \ \mbox{ for every } \varphi \in (\gamma, \varphi^*).$$ 
	 	 Hence, 
	 	$$ (R+r\cos \varphi)C^{\prime }_2(\varphi)>(R+r\cos\varphi^*)C^{\prime }_2(\varphi^*)-K(\varphi)>-K(\varphi)\ \mbox{ for every } \varphi \in (\gamma, \varphi^*).$$ 
	 	Thus,  \begin{equation}\label{c gamma}
	 	C^{\prime }_2(\varphi)> \displaystyle -\dfrac{K(\varphi)}{ R+r\cos \varphi}\quad\mbox{ for every } \varphi \in (\gamma, \varphi^*).
	 	\end{equation}
	 		Therefore, it follows that
	 	\begin{align*}
	 	C_2(\gamma)&=C_2(\varphi^*)-\int\limits_\gamma^{\varphi^*}C^\prime_2(h)\,dh \leq-\int\limits_\gamma^{\varphi^*}C^\prime_2(h)\,dh <  \int\limits_\gamma^{\varphi^*} \dfrac{K(h)}{ R+r\cos h} dh .
	 	\end{align*}
	 	If we set $\tau=-\displaystyle \int\limits_\gamma^{\varphi^*} \dfrac{K(h)}{ R+r\cos h}dh\, (>0)$, $\tau$ is independent of $n$ and $C_2(\gamma)< -\tau$. 
	 	Moreover, letting $\varphi \rightarrow \gamma^+$ in \eqref{c gamma} yields that
	 	\begin{equation}\label{c prime gamma}
	 	C^{\prime }_2(\gamma)\geq \displaystyle -\dfrac{K(\gamma)}{ R+r\cos \gamma}>0. 
	 	\end{equation}
	 	
	 	Next we set
	 	$$H^*=\inf \bigg\{ \Lambda \in (0,\gamma) \, \bigg| \, C^\prime_2 (\varphi) > 0\ \mbox{ for every } \varphi \in (\Lambda,\gamma]\bigg\}.$$
	 	Then $0\leq H^* < \gamma$ and hence
	 	\begin{equation*}
		C^\prime_2 (\varphi) > 0 \mbox{ and } C_2(\varphi)<C_2(\gamma)< -\tau\  \mbox{ for every } \varphi \in  (H^*,\gamma).
	 	\end{equation*}
	 	Let us show that $H^*=0$. For this purpose, we  suppose  that $H^*>0$. 
	 	Recall that $n\geq N>  \sqrt{\lambda_1} (R+r)$.  Since $ \dfrac{A(\varphi)}{\tau}$ is independent of $n$, we may update $N \in \mathbb{N}$ with a large number such  that if $n \geq N$ we have
 	$$B_n (\varphi) > \dfrac{A(\varphi)}{\tau}\  \mbox{ for every } \varphi \in (0,\pi).$$ 
	Let $n \geq N$. As a consequence,
	\begin{align*}
	\int\limits_{H^*}^{\gamma} \bigg((R+r\cos h)C^{\prime }_2(h)\bigg)^\prime dh&=\int\limits_{H^*}^{\gamma} (R+r\cos h) \bigg[B_n(h) C_2(h)+A(h)\bigg] dh\\
	&<\int\limits_{H^*}^{\gamma} (R+r\cos h) \bigg[-B_n(h) \tau+A(h)\bigg] dh < 0.
	\end{align*}
	Then, we have from \eqref{c prime gamma} that
	$$0< (R+r\cos \gamma)C^{\prime }_2(\gamma)< (R+r\cos H^*)C^{\prime }_2(H^*).$$
	Thus $C^{\prime }_2(H^*)>0$, which contradicts the definition of $H^*$. Therefore $H^*=0$ and hence
	\begin{equation*}
	C^\prime_2 (\varphi) > 0  \mbox{ and } C_2(\varphi)< -\tau\  \mbox{ for every } \varphi \in  (0,\gamma).
	\end{equation*}
	In particular, we have $C_2(0)<0$ which  contradicts the fact that $C_2(0)=0$. Eventually, $C_2(\varphi^*)>0$.
\end{proof} 
 
\end{document}